\newcommand{\N}{\mathbb{N}}
\newcommand{\R}{\mathbb{R}}
\newcommand{\C}{\mathbb{C}}
\newcommand{\Z}{\mathbb{Z}}
\newcommand{\Q}{\mathbb{Q}}
\newcommand{\rar}{\rightarrow}
\newcommand{\mc}{\mathcal}
\newcommand{\mr}{\mathrm}
\newtheorem{theorem}{Theorem}
\newtheorem{lemma}[theorem]{Lemma}
\newtheorem{proposition}[theorem]{Proposition}
\newcommand{\vL}{\varLambda}
\newcommand{\cL}{\mathcal{L}}
\newcommand{\oplam}{\mbox{\Large $\curlywedge$}}
\begin{document}
	
	\title{The Gauss circle problem for Penrose tilings}
	
	\author{Alan Haynes, Christopher Lutsko}
	
	\keywords{Penrose tilings, aperiodic tilings, quasicrystals, lattice point counting estimates, Gauss circle problem}
	
	\subjclass[2020]{11P21; 52C23; 11D45}

	\begin{abstract}
	Let $B_R$ denote the closed Euclidean ball of radius $R$ in the plane. In this paper we prove that, if  $V$ is the set of vertices of any unit length rhombic Penrose tiling then, for $R\ge 2$,
	\[\#(V\cap B_R)=\pi C_P R^2 + O(R^{2/3}(\log R)^{2/3}),\]
	where $C_P\approx 1.231$ is a constant.
	\end{abstract}
	
	\maketitle

\section{Introduction}\label{sec.Intro}
Aperiodic tilings of Euclidean space, a topic once regarded as an esoteric pursuit, are now widely recognized as a subject of central interest. Beginning with work of Hao Wang and Robert Berger in the 1960's studying a relationship between aperiodic tilings and the Halting problem for Turing machines, results surrounding aperiodic tilings have had substantial impacts in the last century within mathematics, computer science, the natural sciences, and even the liberal arts. Excellent summaries of many of these results, up until about 2010, can be found in \cite[Chapter 1]{Gard1997}, \cite[Chapters 1, 2]{Sene1995}, \cite[Forward by Roger Penrose]{BaakGrim2013}, and \cite[Chapter 1]{BaakGrim2013}. There has also been significant and highly publicized progress in this subject in the last decade.

One landmark, which has also become an inspiration in art and architecture, was Roger Penrose's discovery of a pair of polygonal tiles which could tile the plane, but could only do so aperiodically. Penrose published a first version of his tilings in 1974 \cite{Penr1974} (see also \cite{Penr1979}). Soon after, they were introduced to the general public in an article by Martin Gardner in the Scientific American \cite{Gard1977}.

\begin{figure}[h]
	\caption{Tiles used to construct rhombic Penrose tilings.\\}\label{fig.PenTiles}
	\centering
	\vspace*{-30bp}
	\includegraphics[width=0.75\textwidth]{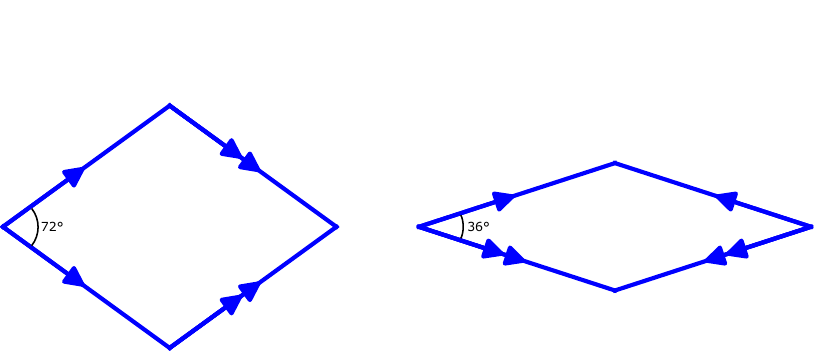}
\end{figure}

There are several related collections of tilings which are referred to as Penrose tilings. In this article we will consider \textit{rhombic Penrose tilings} (RPTs), which are tilings of the plane by copies of two rhombuses with angles $72^\circ$ and $36^\circ$, pictured in Figure \ref{fig.PenTiles}. The rhombuses are allowed to be rotated, but they must fit together edge to edge, and the matching rules indicated by the arrows must also be enforced. A small patch of an RPT is pictured in Figure \ref{fig.PenPatch} (see also \cite[Figures 1.2, 6.44]{BaakGrim2013}). When the rhombuses are chosen to have side length 1, we call the resulting tilings  \textit{unit length RPTs}.

The main problem that we wish to investigate in this paper is that of estimating the number of vertices of an RPT which lie in $B_R$, the closed Euclidean ball of radius $R$, centered at some point in the plane which we fix to be the origin. RPTs can be defined using primitive inflation rules (see \cite[Section 6.2]{BaakGrim2013}), which guarantee existence of limiting frequencies of tile types in large regions. Although it is not obvious, it is not too difficult to deduce (see for example \cite[Remark 5]{SchmTrev2018}) that if $V$ is the vertex set of a unit length RPT then, for $R\ge 1$,
\begin{equation}\label{eqn.PenVertTrivEst}
	\#(V\cap B_R)=\pi C_P R^2 + O(R),
\end{equation}
where
\[C_P=\frac{\phi}{5}\sqrt{10+2\sqrt{5}}\approx 1.231,\]
with $\phi=(1+\sqrt{5})/2$. The error term on the right hand side of \eqref{eqn.PenVertTrivEst} comes from counting tiles which intersect the boundary of $B_R$.
%This is sometimes known as the Lipschitz principle.
Since there are in fact $\gg R$ such tiles, it may appear that this error term is essentially best possible (cf. results of Solomon from \cite{Solo2011} on counting tiles in \textit{supertiles} of primitive substitutions). However, we will show that for this special class of tilings, one can actually do better.
\begin{theorem}\label{thm.Pen}
Let $V$ be the set of vertices of any unit length RPT. Then, for $R\ge 2$,
\[\#(V\cap B_R)=\pi C_P R^2 + O(R^{2/3}(\log R)^{2/3}).\]
\end{theorem}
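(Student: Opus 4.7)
The natural approach is via the cut-and-project representation of rhombic Penrose tilings. The vertex set $V$ can be realised as a model set
\[V = \{\pi_1(\lambda) : \lambda \in \Lambda,\ \pi_2(\lambda) \in W\},\]
where $\Lambda$ is a rank-$4$ lattice in $\mathbb{R}^4 = E_1 \oplus E_2$, with $E_1, E_2$ the two $2$-dimensional eigenspaces of a pentagonal rotation acting on $\mathbb{R}^4$, $\pi_1, \pi_2$ are the projections to $E_1$ (physical) and $E_2$ (internal), and $W \subset E_2$ is a window consisting of a union of regular pentagons. In these terms, $\#(V\cap B_R)$ is a lattice-point count for $\Lambda$ in the cylinder $\pi_1^{-1}(B_R) \cap \pi_2^{-1}(W)$.

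I would smooth the physical cutoff $\mathbf{1}_{B_R}$ at a scale $\delta_1$ and the internal cutoff $\mathbf{1}_W$ at a scale $\delta_2$, producing sandwich functions $f_R^\pm \approx \mathbf{1}_{B_R}$ and $g_W^\pm \approx \mathbf{1}_W$ agreeing with the sharp indicators outside $\delta_i$-neighbourhoods of the respective boundaries. Poisson summation on $\Lambda$ then yields
\[\sum_{\lambda \in \Lambda} f_R(\pi_1\lambda)\, g_W(\pi_2\lambda) = \frac{1}{\mathrm{covol}(\Lambda)} \sum_{k \in \Lambda^*} \widehat{f_R}(\pi_1^* k)\, \widehat{g_W}(\pi_2^* k).\]
The $k = 0$ term contributes $\pi R^2 \mathrm{vol}(W)/\mathrm{covol}(\Lambda)$, which a direct computation in the pentagonal lattice identifies with $\pi C_P R^2$, recovering \eqref{eqn.PenVertTrivEst}. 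The two smoothings introduce one-sided losses $O(R\delta_1)$ in the physical direction and $O(R^2 \delta_2)$ in the internal direction.

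For the nonzero frequencies I would combine the Bessel-type decay
\[|\widehat{f_R^\pm}(\xi)| \ll \min(R^2,\, R^{1/2}|\xi|^{-3/2})(1+\delta_1|\xi|)^{-N}\]
with the polygon Fourier decay $|\widehat{g_W^\pm}(\eta)| \ll \min(1,\, |\eta|^{-2})(1+\delta_2|\eta|)^{-N}$ in generic internal directions (degrading to $|\eta|^{-1}$ perpendicular to pentagon edges). Summing $k \in \Lambda^*$ dyadically and using the non-resonance of the pair $(\pi_1^*, \pi_2^*)$ -- equivalently, the equidistribution of the image of $\Lambda^*$ in $E_1 \times E_2$ modulo the projection lattice -- reduces the problem to a family of planar Gauss-circle sums over $\pi_1^*\Lambda^*$ with weights controlled by the internal Fourier mass. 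These are treated by the van der Corput / Huxley exponent-pair method just as in the classical Gauss circle problem, yielding the exponent $2/3$ with a $\log^{2/3}$ loss. Optimising $\delta_1, \delta_2$ against the smoothing losses gives the claimed error.

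The principal obstacle I anticipate is the finitely many bad internal directions along which $\widehat{g_W}$ decays only as $|\eta|^{-1}$; in these directions the Fourier tail behaves one-dimensionally and could a priori overshoot the target $O(R^{2/3}(\log R)^{2/3})$. The resolution must come from the arithmetic structure of $\Lambda$: the cyclotomic field $\mathbb{Q}(\zeta_5)$ underlying the pentagonal projection imposes an effective Diophantine separation between $\pi_1^* k$ and $\pi_2^* k$, so that any frequency $k$ with $\pi_2^* k$ close to a bad direction must have $\pi_1^* k$ large enough for the $|\xi|^{-3/2}$ Bessel decay to compensate. Making this separation quantitative and matching it to the Huxley-type estimate on the generic directions is the technically delicate step I expect to occupy the main part of the work.
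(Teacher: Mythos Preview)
Your outline is essentially the paper's strategy: cut-and-project realisation, two-scale smoothing in physical and internal space, Poisson summation on the 4-dimensional lattice, Bessel decay for $\widehat{\chi}_{B_R}$, polygon decay for $\widehat{\chi}_W$, and the Diophantine input from $\Q(\zeta_5)$ to control the bad internal directions. You have correctly anticipated that the edges of the pentagonal windows are the heart of the matter and that the norm relation $|\pi_1^* k|\,|\pi_2^* k|\gg 1$ is what rescues you there.

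One substantive difference. The paper does \emph{not} reduce to planar Gauss circle sums or invoke any van~der~Corput/Huxley exponent-pair machinery. The exponent $2/3$ arises purely from optimising the smoothing scales: with physical smoothing $\epsilon$ and internal smoothing $\delta$, the Fourier tail over $|\theta|>1$ is bounded directly by $\epsilon^{-1/2}R^{1/2}\log R$, and balancing this against the boundary loss $\epsilon R$ gives $\epsilon\asymp R^{-1/3}(\log R)^{2/3}$ and the claimed error. In other words this is the Sierpi\'nski-level bound coming from the truncated Hardy--Voronoi sum, not the van~der~Corput $AB$-process. Your proposed reduction to weighted 2D circle sums is not obviously well-posed, since $\pi_1^*k$ and $\pi_2^*k$ are rigidly linked by the star map and cannot be decoupled; the paper instead keeps the full 4D sum and slices it according to dyadic ranges of $|\theta|$ and shells in $|\theta^\star|$, using the norm relation and the fivefold symmetry plus complex conjugation to show that $\widehat{\chi}_W(\theta^\star)\ll|\theta^\star|^{-1}\min\{1,|\theta|\}$ and to control the average of $|\widehat{\chi}_W(\theta^\star)|$ over each shell. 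Finally, note that the paper needs the error to be uniform in the translation parameter of the window, so that one can pass to Chabauty--Fell limits and cover the singular RPTs as well; this is a point your sketch does not mention but which is required for the full statement.
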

Readers who are familiar with the analogous counting problem for integer lattice points in the plane, called the Gauss circle problem, may at this point recognize some similarities. Let us write
\[\#(\Z^2\cap B_R)=\pi R^2 + E(R).\]
Gauss himself observed that $|E(R)|\le 2\sqrt{2}\pi R$, and he conjectured that it should be possible to improve the exponent on $R$  to obtain an error term of $\ll R^{1/2}$. The next substantial result was published in 1906 by Sierpi\'nski \cite{Sier1906}, who showed that $E(R)\ll R^{2/3}$ (see also \cite{Schi1972}). Subsequent improvements have been made by many authors (the introduction of \cite{Huxl2003} contains a list), and currently the best known upper bound, due to Martin Huxley \cite{Huxl2003}, is $E(R)\ll R^{131/208}(\log R)^\tau$, with $\tau=18627/8320$.

The important link which allows us to employ techniques used in the Gauss circle problem in order to study vertices of Penrose tilings is a remarkable observation of de Bruijn, that the vertex set of any RPT can be obtained from a cut and project set, by projecting `slices' of lattice points in a higher dimensional space onto a 2 dimensional subspace. We describe this connection in detail in Section \ref{sec.ModelSets}. With this as a general framework, we are able in Sections \ref{sec.Fourier}-\ref{sec.PenProof} to bring in tools from Fourier analysis and to approach our point counting problem through the lens of the Poisson summation formula (PSF). Results of a similar flavor, estimating discrepancies of points in growing patches of cut and projects sets, have recently been explored by Koivusalo and Lagac{\'e} in \cite{KoivLagacBjorHart2025}, and by R\"uhr, Smilansky, and Weiss in \cite{RuhrSmilWeis2024}.

We remark that our proof, although structurally similar, is not a na\"ive application of known methods (e.g. Huxley's Fourier analytic approach to lattice point counting in regions with smooth boundary, using Van der Corput's method). Here, there are two new difficulties which are not encountered in the classical versions of the Gauss circle problem. The first is that, for the cut and project sets that we consider, the problem amounts to counting points in a lattice in a $4$-dimensional cylinder which is only growing radially. Since the height of the cylinder stays fixed, the smoothing necessary to apply the PSF must shrink faster in the direction of the height than in the radial direction. Secondly, the lattices from which we derive RPTs are irrational. This has the effect that, depending on how well approximable the lattice points are by rational points, smoothing our regions may pick up points which are unusually close to their boundaries. This is a real phenomenon, which can potentially cause a blow up in the Fourier coefficients after using the PSF. It is one of the main issues that we must confront using the specific geometric and algebraic data which define RPTs.

%This issue shows up when trying to bound the Fourier transform of the indicator function of a polygon. Fortunately, we can exploit an identity (see Proposition \label{prop.FourTranPoly}, proved in \cite{BranColzTrav1997}).

\begin{figure}[h]
	\caption{A patch of an RPT constructed using the cut and project method detailed in Section \ref{sec.ModelSets}. The 4 colors of vertices are translates of the cut and project sets produced using the different windows.\\}\label{fig.PenPatch}
	\centering
	\includegraphics[width=0.75\textwidth]{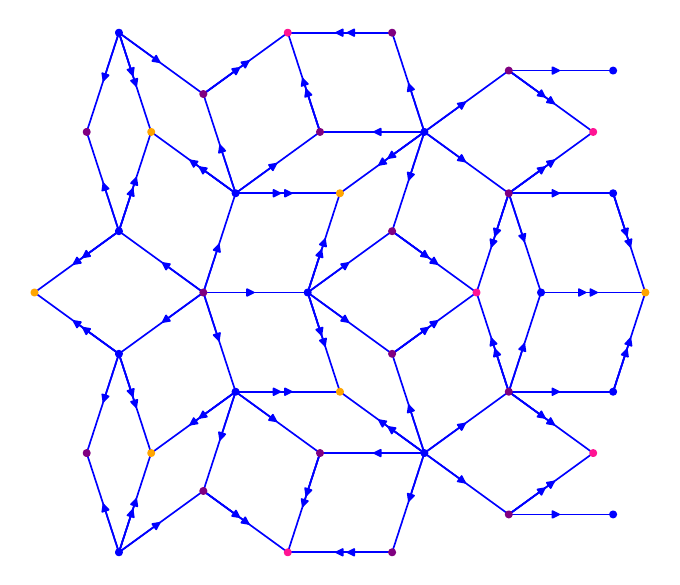}
\end{figure}

Finally, there is the question of how these results could be generalized. Several parts of the proofs below can be adapted quite well to deal with much wider classes of cut and project sets. However, there are some crucial details in the proof, in particular those contained in Section \ref{sec.PenProof}, which form a narrow gate that any more general result would need to pass through. The cut and project sets used to define RPTs are extremely special, both in the algebraic properties of the lattice involved and in the exact shape, symmetry, and orientation of the windows involved, with respect to the specific underlying lattice. Any attempt to modify the lattice, or change the windows (even by a small rotation) will destroy important features of the argument and make it difficult or even impossible to control the sizes of the Fourier coefficients involved.

On the other hand, it seems likely that the arguments given here should apply equally well, with the appropriate modifications, to vertex sets of Ammann-Beenker tilings (see \cite[Section 10.4]{GrunShep1987} and \cite{Been1982}). We leave it to the interested reader to work out the details.

%
%\begin{theorem}\label{thm.Amm}
%	Ammann-Beenker tilings theorem
%\end{theorem}

\section{Vertices of Penrose tilings via cut and project sets}\label{sec.ModelSets}
In his 1981 paper \cite{deBr1981}, de Bruijn developed the theory of `pentagrids' to explain how vertex sets of RPTs can be constructed in a systematic mathematical way. There are several formulations which are essentially equivalent to the pentagrid construction. The one that we will use in this paper is the construction via cut and project sets. This is explained in de Bruijn's paper \cite[Section 7]{deBr1981}, and significant further investigation can be found in work of G\"ahler and Rhyner \cite{GahlRhyn1986} and Robinson \cite{Robi1996}. To keep our presentation consistent with modern notation and conventions regarding these sets, we will closely follow the exposition of this topic given in Baake and Grimm's book, Aperiodic Order, Vol. 1 (see \cite[Example 7.11]{BaakGrim2013}).

Suppose that $k>d\ge 1$ are integers. Our construction takes place in $\R^k$, which we call the \textit{total space}. Write as $\R^k=\R^d {\times}  \R^{k-d}$, with projections
$\pi\colon \R^k\rar\R^d$ onto the first $d$ coordinates (called the
\textit{physical space}), and $\pi^{}_{\mathrm{int}}\colon \R^k\rar\R^{k-d}$ onto the last $(k-d)$
coordinates (called the \textit{internal space}). We refer to the physical and internal spaces as $G$ and $H$, respectively. Suppose that $\cL\subset\R^k$ is a lattice (i.e. a discrete subgroup of $\R^k$ for which the quotient $\R^k/\cL$ is compact), and further suppose that the map $\pi|^{}_{\cL}$ sending points of $\cL$ to $L:= \pi(\cL)$ is injective, and that the set $L^{\star}:=\pi^{}_{\mathrm{int}}(\cL)$ is dense in $H$. When these conditions are satisfied, the triple $(G,H,\cL)$ is referred to as a \emph{cut and project scheme} (CPS).

The injectivity of $\pi|_\cL$ allows us to define a map $\star\colon L\rar L^{\star}$, called the \emph{star map} of the CPS, by
\begin{equation*}
	x\, \mapsto\, x^{\star} \, = \, \pi^{}_{\mathrm{int}}
	\bigl( \pi^{-1}(x)\cap\cL \bigr) .
\end{equation*}
The domain of this map can be extended in a canonical way to $\Q L$, but not to all of $G$. However, when considering points in $\R^k=\R^d {\times} \R^{k-d}$, it will also be convenient in what follows to write an element $w\in\R^k$ as $w=(x,x^{\star})$ with $x=\pi(w)\in\R^d$ and $x^{\star}=\pi^{}_{\mathrm{int}}(w)\in\R^{k-d}$. Although technically this is a different use of the symbol $\star$, it is consistent with our other use when $w\in\Q\cL$.

The situation so far is summarized in the following diagram.
\begin{equation}\label{eqn.CPSDiagram}
	\renewcommand{\arraystretch}{1.2}\begin{array}{r@{}ccccc@{}l}
		& G & \xleftarrow{\,\;\;\pi\;\;\,} & G \times H & 
		\xrightarrow{\;\pi^{}_{\mathrm{int}\;}} & H & \\
		& \cup & & \cup & & \cup & \hspace*{-1ex} 
		\raisebox{1pt}{\text{\footnotesize dense}} \\
		& \pi(\cL) & \xleftarrow{\; 1-1 \;} & \cL & 
		\xrightarrow{\; \hphantom{1-1} \;} & \pi^{}_{\mathrm{int}}(\cL) & \\
		& \| & & & & \| & \\
		& L & \multicolumn{3}{c}{\xrightarrow{\qquad\qquad\;\;\,\star\,
				\;\;\qquad\qquad}} 
		&  {L_{}}^{\star} & \\
	\end{array}\renewcommand{\arraystretch}{1}
\end{equation}
Given a CPS $(G,H,\cL)$ and a set $W\subseteq H$ (called the
\emph{window}), we define the \emph{cut and project set} associated to $W$ as
\begin{equation*}
	\vL \, = \, \oplam(W)\, = \, 
	\{x\in L : x^{\star}\in W \}  .
\end{equation*}
Note that $\oplam(W)$ is a point set in $G$. In our applications, the window $W$ will be a closed and bounded region with non-empty interior in $H$. This ensures that $\oplam(W)$ is a Delone set, i.e. that it is both uniformly discrete and relatively dense in $G$.

In later sections when we work with Fourier transforms, we will also need to refer to the dual lattice of $\cL$, which we denote by $\cL^{*}$, and to its \textit{Fourier module}, defined by $L^{\circledast}:=\pi(\cL^{*})\subset G$. It can be shown that when $\mc{L}$ is part of a CPS as in \eqref{eqn.CPSDiagram}, the restriction $\pi|_{\cL^*}$ of $\pi$ to $\cL^*$ is injective \cite[Proposition 2.2]{BaakHayn2026}.

Now let $K=\Q(\zeta_5)$, where $\zeta_5$ is a primitive $5$th root of unity. The four embeddings of $K$ into $\C$ are the field homomorphisms $\sigma_1$ and $\sigma_2$ determined by $\sigma_1(\zeta_5)=\zeta_5$ and $\sigma_1(\zeta_5)=\zeta_5^2$, and their complex conjugates. We identify $\C$ with $\R^2$ and define the \textit{Minkowski embedding} $\sigma:K\rar\R^4$ by $\sigma(\alpha)=(\sigma_1(\alpha),\sigma_2(\alpha))$. From basic algebraic number theory (see \cite[Chapter 1]{Swinn2001}), the Minkowski embedding is an injective map, and the image under $\sigma$ of any $\Z$-module of maximal rank in $K$ is a lattice in $\R^4$.

We fix a CPS by selecting $G=H=\R^2$ (still identifying these, when convenient, with $\C$), and $\cL=\sigma \left((1-\zeta_5)\Z[\zeta_5]\right)$. In this case we have $L=(1-\zeta_5)\Z[\zeta_5]$ and $L^\star=(1-\zeta_5^2)\Z[\zeta_5],$ and it is also not difficult to compute that that $\cL^*\subseteq\frac{1}{25}\cL$. The density of $L^\star$ in $H$, although not obvious, can be established directly using arguments from the geometry of numbers. However, we omit the proof since it is well known and it also follows as a special case of the Strong Approximation Theorem (see \cite[Chapter II, Section 15]{CassFroh1967}).

To define windows, let $P$ denote the convex hull of the fifth roots of unity in $H$, let $\tau=\frac{1+\sqrt{5}}{2}$, and for each $\omega\in\C$ let
\begin{align*}
	W_{1,\omega}&=P-1+\omega,\\
	W_{2,\omega}&=-\tau P-2+\omega,\\
	W_{3,\omega}&=\tau P-3+\omega,\quad\text{and}\\
	W_{4,\omega}&=-P-4+\omega.
\end{align*}
Then define cut and project sets
\[\Lambda_{m,\omega}=\oplam(W_{m,\omega}),\]
and finally set
\[\Lambda_\omega=\bigcup_{m=1}^4\left(\Lambda_{m,\omega}+m\right)=\bigcup_{m=1}^4\Lambda_{m,\omega+m}.\]
Here we distinguish two cases, which depend on the choice of $\omega$. If $L^*$ intersects the boundary of $W_{m,\omega}$, for some $m$, then we say that $\Lambda_\omega$ is \textit{singular}. Otherwise $\Lambda_\omega$ is \textit{non-singular}. It is clear that, for almost all choices of $\omega$, the set $\Lambda_\omega$ is non-singular.

The work of de Bruijn, et al., mentioned at the beginning of this section (see references listed above), implies the following result.
\begin{theorem}\label{thm.PenVerts}
Every non-singular point set $\Lambda_\omega$, constructed as above, is the set of vertices of a unit length rhombic Penrose tiling. Furthermore, the vertex set of any unit length rhombic Penrose tiling can be obtained, after rotation, as the limit in an appropriate topology (e.g. the Chabauty-Fell topology) of a sequence of non-singular point sets $\Lambda_\omega$.
\end{theorem}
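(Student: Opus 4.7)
The plan is to deduce Theorem \ref{thm.PenVerts} from the now-classical cut-and-project characterization of rhombic Penrose tilings, going back to de Bruijn's pentagrid construction in \cite{deBr1981} and systematized in modern notation by G\"ahler and Rhyner \cite{GahlRhyn1986}, Robinson \cite{Robi1996}, and most relevantly in \cite[Example 7.11]{BaakGrim2013}. My task is therefore primarily one of translation: to verify that the CPS $(G,H,\cL)$, the windows $W_{m,\omega}$, and the external shifts $+m$ assembled in this section reproduce one of the standard descriptions of the RPT vertex set.

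For the first assertion, I would proceed in three steps. First, I would identify $\cL=\sigma((1-\zeta_5)\Z[\zeta_5])$ with the Minkowski-type lattice appearing in \cite[Example 7.11]{BaakGrim2013}, checking that the factor $(1-\zeta_5)$ normalizes the projected lattice $L$ so that its nearest-neighbour distances in $G$ equal $1$, which is the correct scale for \emph{unit length} rhombi. Second, I would check that the four windows are the correctly positioned, scaled, and oriented copies of the regular pentagon $P$: the internal translations by $-m+\omega$ combined with the external translation by $+m$ in the definition of $\Lambda_\omega$ realign the four vertex classes with their positions in de Bruijn's indexing, while the scaling factors $\{1,-\tau,\tau,-1\}$ reproduce the known internal-space images of the four vertex types, with the negative signs coming from the action of the nontrivial Galois automorphism $\zeta_5\mapsto\zeta_5^2$. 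Third, with this dictionary in place and $\omega$ non-singular (so that no $x\in L$ has $x^\star\in\partial W_{m,\omega}$, which corresponds to regularity of the associated pentagrid), the de Bruijn theorem yields that $\Lambda_\omega$ is the vertex set of a legal unit length RPT.

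For the second assertion, I would invoke standard facts about the continuous hull of an aperiodic tiling. Non-singular parameters $\omega$ form a dense $G_\delta$ in $\C$, and the map $\omega\mapsto\Lambda_\omega$ is continuous when the space of Delone sets is given the Chabauty-Fell topology. Combined with the well known local indistinguishability of RPTs---any finite patch appearing in one RPT appears in every other, and with the same frequency---a diagonal argument then shows that every unit length RPT vertex set arises, after a rotation by a suitable multiple of $2\pi/10$ to fix the orientation class, as the Chabauty-Fell limit of a sequence of non-singular $\Lambda_\omega$'s.

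The main obstacle, I expect, lies in step two of the first part: pinning down the precise correspondence between the single complex parameter $\omega\in\C$ and the five-parameter pentagrid shifts $(\gamma_0,\ldots,\gamma_4)$ subject to $\sum\gamma_j=0$. The passage between these two parameterizations relies on the identity $\sum_{j=0}^4\zeta_5^j=0$, which constrains the shifts to a two-dimensional affine subspace canonically isomorphic to $H$; verifying that this identification simultaneously produces all four windows $W_{m,\omega}$ with the correct rotations, scalings, and offsets is delicate bookkeeping, but it is essentially the calculation already carried out in \cite[Example 7.11]{BaakGrim2013}, which I would follow step by step.
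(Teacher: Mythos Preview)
Your proposal is consistent with the paper's treatment: the paper does not prove Theorem~\ref{thm.PenVerts} at all, but simply states it as a consequence of ``the work of de Bruijn, et al.'' with pointers to \cite{deBr1981}, \cite{GahlRhyn1986}, \cite{Robi1996}, and \cite[Example~7.11]{BaakGrim2013}. Your outline is therefore more detailed than what the paper provides, but it follows exactly the same route---translating the CPS data here into the standard literature descriptions and invoking known hull/continuity facts for the limit statement---so there is nothing to compare.
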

An example of a RPT obtained using the cut and project method described above is shown in Figure \ref{fig.PenPatch}. Theorem \ref{thm.PenVerts} will allow us to derive Theorem \ref{thm.Pen} from the following result.
\begin{theorem}\label{thm.PenCPVersion}
For the cut and project sets $\Lambda_{m,\omega}$ defined above, there is a constant $C>0$ with the property that, for any $\omega\in\C$, any integer $1\le m\le 4$, and any $R\ge 2$,
\[\left|\#\left(\Lambda_{m,\omega}\cap B_R\right)-\pi R^2\mr{dens}(\cL)\left|W_{m,\omega}\right|\right|\le CR^{2/3}(\log R)^{2/3},\]
where $|W_{m,\omega}|$ denotes the area of $W_{m,\omega}$, and $\mr{dens}(\cL)$ is the reciprocal of the volume of a fundamental domain for $\R^4/\cL$.
\end{theorem}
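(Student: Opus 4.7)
The plan is to reduce Theorem \ref{thm.PenCPVersion} to a lattice point count in four dimensions, and then attack it by smoothed Poisson summation on $\cL$. Since $\pi|_\cL$ is injective and $\Lambda_{m,\omega}\cap B_R=\{\pi(w):w\in\cL,\ \pi(w)\in B_R,\ \pi^{}_{\mathrm{int}}(w)\in W_{m,\omega}\}$, the count equals $\#(\cL\cap(B_R\times W_{m,\omega}))$, the number of lattice points in a $4$-dimensional cylinder that grows radially in the physical direction while staying fixed in the internal direction. I would introduce two smoothing scales, $\delta_1$ for the boundary of $B_R$ and $\delta_2\ll\delta_1$ for the boundary of $W_{m,\omega}$ (the disparity reflecting that the ball grows while the window does not), and construct $C^\infty$ sandwich functions $\chi_R^\pm$ and $\psi^\pm$ bracketing the two indicators. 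Setting $F^\pm(w)=\chi_R^\pm(\pi w)\,\psi^\pm(\pi^{}_{\mathrm{int}}w)$ and applying Poisson summation on $\cL$ gives
\[
\sum_{w\in\cL}F^\pm(w)=\mathrm{dens}(\cL)\sum_{y\in\cL^*}\widehat{\chi_R^\pm}(\pi y)\,\widehat{\psi^\pm}(\pi^{}_{\mathrm{int}}y).
\]
The $y=0$ term produces the claimed main term $\pi R^2\mathrm{dens}(\cL)|W_{m,\omega}|$ up to a smoothing error of size $O(R\delta_1+\delta_2)$.

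The core of the argument is estimating the remaining sum over $y\in\cL^*\setminus\{0\}$. For the smoothed ball, stationary phase (Bessel function asymptotics) gives $|\widehat{\chi_R^\pm}(\xi)|\ll R^{1/2}|\xi|^{-3/2}$ for $1\ll|\xi|\lesssim\delta_1^{-1}$, with negligible contribution beyond the cutoff; for the polygonal window, $|\widehat{\psi^\pm}(\eta)|\ll|\eta|^{-2}$ in generic directions, with only $|\eta|^{-1}$ decay along rays normal to an edge of the pentagon, and is again negligible for $|\eta|\gtrsim\delta_2^{-1}$. To control the joint behavior of $(\pi y,\pi^{}_{\mathrm{int}}y)$, I would exploit the algebraic origin of $\cL$: since $\cL^*\subseteq\tfrac{1}{25}\cL$ and $\cL$ arises by Minkowski embedding from an ideal of $\Z[\zeta_5]$, the identity $|\sigma_1(\alpha)|^2|\sigma_2(\alpha)|^2=|N_{K/\Q}(\alpha)|$, combined with $|N_{K/\Q}(\alpha)|\ge 1$ for $0\ne\alpha\in\Z[\zeta_5]$, yields the Diophantine lower bound $|\pi y|\cdot|\pi^{}_{\mathrm{int}}y|\gg 1$ for every $y\in\cL^*\setminus\{0\}$. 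This prevents dual-lattice points from clustering in the region where both Fourier factors are simultaneously small. Splitting the sum dyadically in $|\pi y|$ and $|\pi^{}_{\mathrm{int}}y|$, integrating $|\xi|^{-3/2}|\eta|^{-2}$ over the admissible range with this lower bound, and counting dual points by equidistribution produces a Fourier error roughly of size $R^{1/2}\delta_1^{-1/2}(\log(1/\delta_1))^\kappa$; balancing against $R\delta_1$ and choosing $\delta_1\sim R^{-1/3}(\log R)^{2/3}$ (with $\delta_2$ suitably smaller) yields the claimed $O(R^{2/3}(\log R)^{2/3})$ bound.

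The main obstacle I anticipate, and the one the authors themselves identify with Section \ref{sec.PenProof}, is the contribution from dual frequencies $y$ whose internal projections $\pi^{}_{\mathrm{int}}(y)$ point into one of the five narrow sectors perpendicular to the edges of $P$: along these rays the window's Fourier transform only decays like $|\eta|^{-1}$, and a naive estimate would lose a factor that destroys the main bound. Handling these exceptional directions requires using the pentagonal symmetry of the windows in tandem with the arithmetic of $\Q(\zeta_5)$ to show that such $y\in\cL^*$ are both sparse and pushed away from the origin by a quantitative Diophantine estimate. This is precisely the ``narrow gate'' in the argument, and it explains why the proof is sensitive to the specific shape, orientation, and arithmetic alignment of the windows with the underlying lattice, and why even a small rotation of either would break the bound.
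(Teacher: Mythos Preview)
Your outline is essentially the paper's own proof: two-scale smoothing of $\chi_{B_R}\times\chi_W$, Poisson summation over $\cL$, Bessel and polygon Fourier bounds, the norm-form inequality $|\pi y|\,|\pi^{}_{\mathrm{int}}y|\gg 1$ on $\cL^*\setminus\{0\}$, and a separate treatment of the edge-perpendicular sectors via the $5$-fold symmetry and the arithmetic of $\Q(\zeta_5)$, with the final balance $\delta_1\asymp R^{-1/3}(\log R)^{2/3}$. One slip: shrinking the window by $\delta_2$ perturbs the main term by $O(R^2\delta_2)$, not $O(\delta_2)$, so you need $\delta_2\ll R^{-4/3}$ (the paper takes $\delta_2=R^{-3/2}$); with that correction your sketch matches.
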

An important point is that the error term in this theorem is uniform in $\omega$, which allows us to take limits of non-singular point sets, to account for all possible vertex sets of RPTs, in order to establish Theorem \ref{thm.Pen}. There are a few other small details that have to be ironed out, but they are not difficult to justify, so we postpone their discussion until the end of Section \ref{sec.PenProof}. The majority of the remainder of the paper will be focused on proving Theorem \ref{thm.PenCPVersion}.

\section{Results from Fourier analysis}\label{sec.Fourier}
In this section we will summarize notation and background results in Fourier analysis that we need for our proofs. However, we assume that the reader is familiar with basic properties of Fourier transforms as presented, for example, in \cite[Chapter 1]{SteiWeis1971}.

For $z\in\C$ let $e(z)=e^{2\pi i z}$. We define the \emph{Fourier transform} of a complex-valued function $\phi\in L^1(\R^s)$ to be the function
$\widehat{\phi}\colon\R^s\rar\C$ given by
\[
\widehat{\phi}(y) \, =
\int_{\R^s_{\vphantom{t}}}e(-x\cdot y) \, \phi(x ) \mr{d} x.
\]
The \emph{convolution} of $\phi^{}_1,\phi^{}_2\in L^1(\R^s)$
is the function $\phi^{}_1  \ast \phi^{}_2\in L^1(\R^s)$ defined
by
\[
\bigl( \phi^{}_1  \ast \phi^{}_2 \bigr) (y) \, =
\int_{\R^s_{\vphantom{t}}}\phi^{}_1(x)\,
\phi^{}_2(y-x) \mr{d} x,
\]
and we have that
\begin{equation*}
	\widehat{\phi^{\rule{0pt}{2pt}}_1\!\ast
		\phi^{}_2} (y) \, = \,
	\widehat{\phi^{}_1}(y)\,\widehat{\phi^{}_2}(y)
	\qquad\text{for all}~y\in\R^s .
\end{equation*}
We will use the following version of the Poisson summation formula, a proof of which can be found in \cite[Proposition 2.3]{BaakHayn2026}.
\begin{proposition}\label{prop.PSF}
	Let\/ $\cL\subset\R^s$ be a lattice, with dual lattice\/
	$\cL^*$. Suppose that\/ $\phi$ is a continuous function with compact
	support and that
	\begin{equation}\label{eqn.PSFFourDec}
		\sum_{\xi\in\cL^\ast_{\vphantom{t}}} \bigl| \widehat{\phi}(\xi) \bigr|
		\, < \, \infty.
	\end{equation}
	Then, for all\/ $y\in\R^s$, we have the identity
	\begin{equation*} 
		\sum_{\ell\in\cL}\phi(y+\ell) \, = \, 
		\mathrm{dens} (\cL) \!\sum_{\xi\in\cL^*_{\vphantom{t}}}\!
		\widehat{\phi}(\xi) \, e(\xi\cdot y),
	\end{equation*}
	where $\mr{dens}(\cL)$ is the reciprocal of the volume of a fundamental domain for $\R^{s}/\cL$.
\end{proposition}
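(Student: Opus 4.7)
The plan is to establish the identity by periodizing $\phi$ over $\cL$ and expanding the resulting function on the torus $\R^s/\cL$ in a Fourier series indexed by the dual lattice $\cL^*$. First I would form the periodization $F(y) = \sum_{\ell\in\cL}\phi(y+\ell)$. Because $\phi$ is continuous with compact support, for each fixed $y$ only finitely many $\ell\in\cL$ satisfy $\phi(y+\ell)\neq 0$ (by discreteness of $\cL$, the set $(y+\cL)\cap\mr{supp}(\phi)$ is finite). Hence $F$ is well-defined and continuous, and manifestly $\cL$-periodic, so it descends to a continuous function on the compact quotient $\R^s/\cL$.

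Next I would compute the Fourier coefficients of $F$ against the characters $y\mapsto e(\xi\cdot y)$ for $\xi\in\cL^*$. These are exactly the characters of $\R^s/\cL$: by definition of the dual lattice, $\xi\cdot\ell\in\Z$ for every $\xi\in\cL^*$ and $\ell\in\cL$, so $e(\xi\cdot y)$ is $\cL$-periodic. Picking any measurable fundamental domain $D$ of volume $V=1/\mr{dens}(\cL)$, and using the local finiteness of the sum defining $F$ to interchange sum and integral, the standard unfolding computation gives
\[\frac{1}{V}\int_D F(y)\,e(-\xi\cdot y)\,\mr{d}y \, = \, \frac{1}{V}\sum_{\ell\in\cL}\int_{D+\ell}\phi(z)\,e(-\xi\cdot z)\,\mr{d}z \, = \, \mr{dens}(\cL)\,\widehat{\phi}(\xi).\]

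Finally I would invoke the hypothesis \eqref{eqn.PSFFourDec}. It says precisely that the formal Fourier series
\[G(y) \, := \, \mr{dens}(\cL)\sum_{\xi\in\cL^*}\widehat{\phi}(\xi)\,e(\xi\cdot y)\]
is absolutely and uniformly convergent on $\R^s$, and so defines a continuous $\cL$-periodic function. By the previous step, the Fourier coefficients of $G$ on $D$ coincide with those of $F$. Uniqueness of Fourier coefficients for continuous functions on the torus (via density of trigonometric polynomials in $C(\R^s/\cL)$, or equivalently via Fejér summability of an absolutely convergent Fourier series) then forces $F=G$ pointwise, which is the desired identity. The one delicate point in any proof of the PSF is the passage from equality of Fourier coefficients to pointwise equality of functions, and the hypothesis \eqref{eqn.PSFFourDec} is precisely what sidesteps any subtle convergence issue: absolute summability produces a continuous limit, and uniqueness does the rest.
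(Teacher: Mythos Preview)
Your argument is correct and is the standard proof of this version of the Poisson summation formula. Note, however, that the paper does not itself prove Proposition~\ref{prop.PSF}; it simply cites \cite[Proposition 2.3]{BaakHayn2026}, where the argument is presumably the same periodize-and-expand approach you outline.
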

In many cases, we would like to apply the proposition above to functions $\phi$ which are indicator functions of sets in $\mathbb{R}^2$. The obvious issues are that such functions are not continuous and, moreover, in interesting cases their Fourier transforms will not decay rapidly enough for \eqref{eqn.PSFFourDec} to be satisfied. There are several ways of dealing with these issues. The one that we will use is to first form the convolution of our indicator functions with non-negative smooth functions with small compact support. We define $\psi:\R^2\rar\R$ by
\[\psi(x)=\begin{cases}
	\mathrm{exp}\left(\frac{-1}{1-4|x|^2}\right)&\text{if}\quad |x|<1/2,\\
	0&\text{if}\quad |x|\ge 1/2.
\end{cases}\]
Since $\psi\in C_c^\infty(\R^2)$, it is a Schwartz class function, and therefore so is its Fourier transform \cite[Chapter I, Theorem 3.2]{SteiWeis1971}. It follows from this that, for every $N\in\N$,
\[|\widehat{\psi}(y)|\ll_N\frac{1}{(1+|y|^2)^N},\quad\text{for}~ y\in\R^2.\]
Next, for $\epsilon>0$, we define $\psi_\epsilon:\R^2\rar\R$ by
\begin{equation}\label{eqn.PsiEpsDef}
	\psi_\epsilon(x)=\frac{1}{\|\psi\|_1\epsilon^2}\psi\left(\frac{x}{\epsilon}\right),
\end{equation}
where $\|\cdot\|_1$ denotes the $L^1$-norm. The function $\psi_\epsilon$ is non-negative, smooth, has support contained in $B_{\epsilon/2}$, and satisfies $\|\psi_\epsilon\|_1=1$. Its Fourier transform has the property that, for every $N\in\N$,
\begin{equation}\label{eqn.PsiFourEst}
\widehat{\psi}_\epsilon(y)=\frac{\widehat{\psi}(\epsilon y)}{\|\psi\|_1}\ll_N\frac{1}{(1+|\epsilon y|^2)^N},\quad\text{for}~ y\in\R^2.
\end{equation}
These functions will be useful to us in our arguments below.

Now we consider indicator functions of balls and polygons in $\R^2$. For $R>0$, let $\chi_R$ denote the indicator function of $B_R\subseteq\R^2$. Then we have that
\[\widehat{\chi}_1(y)=\frac{J_1(2\pi|y|)}{|y|},\]
where $J_1$ is the Bessel function defined by
\[J_1(t)=\frac{1}{2\pi}\int_{0}^{2\pi} e^{i(t\sin \theta-\theta)}\mr{d}\theta.\]
Linearity properties of the Fourier transform, and standard estimates for Bessel functions (see \cite[Chapter IV, Lemma 3.11]{SteiWeis1971} and the paragraph preceding that lemma) then give that
\begin{equation}\label{eqn.ChiREstimate}
\widehat{\chi}_R(y)=\frac{RJ_1(2\pi R|y|)}{|y|}\ll \min\left\{R^2,\frac{R^{1/2}}{|y|^{3/2}}\right\}.
\end{equation}

If $W\subseteq\R^2$ then we write $\chi_W$ for the indicator function of $W$. The similarity in notation to $\chi_R$, as defined above, will not cause confusion in what follows. Observe that
\[\widehat{\chi}_W(y)=\int_We(-x\cdot y)\mr{d}x=\frac{-1}{(2\pi|y|)^2}\int_W\nabla_x^2\left( e(-x\cdot y)\right)\mr{d}x,\]
where $\nabla_x^2=\nabla_x\cdot\nabla_x$ is the Laplacian. By the Divergence Theorem, under reasonable hypotheses on $W$, this allows us to express the Fourier transform as a line integral over the boundary of $W$. Applying this argument in the case when $W$ is a polygon leads to a particularly nice formula for the Fourier transform, which we reproduce here in the form given in \cite{BranColzTrav1997}.
\begin{proposition}\label{prop.FourTranPoly}
	Let $P\subseteq\R^2$ be a polygon with counterclockwise oriented vertices $\{a_j\}_{j=1}^m$. Denote by $\sigma_j$ the unit vector parallel to the side $[a_j,a_{j+1}]$ and by $v_j$ the outward unit normal to this side. Then, with $a_{m+1}:=a_1$, we have
	\[\widehat{\chi}_P(y)=\frac{-1}{(2\pi|y|)^2}\sum_{j=1}^m\left(\frac{e(-y\cdot a_{j+1})-e(-y\cdot a_{j})}{y\cdot \sigma_j}\right)y\cdot v_j.\]
\end{proposition}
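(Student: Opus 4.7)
The plan is to start from the identity already displayed in the excerpt,
\[
\widehat{\chi}_P(y)\;=\;\frac{-1}{(2\pi|y|)^2}\int_P\nabla_x^2\bigl(e(-x\cdot y)\bigr)\,\mr{d}x,
\]
which rests on the eigenfunction relation $\nabla_x^2 e(-x\cdot y)=-(2\pi|y|)^2 e(-x\cdot y)$, and then to convert the area integral on the right into a boundary integral via the Divergence Theorem. Since $\nabla_x e(-x\cdot y)=-2\pi i\,y\,e(-x\cdot y)$, applying the divergence theorem (valid because $P$ has a Lipschitz boundary) gives
\[
\int_P\nabla_x^2\bigl(e(-x\cdot y)\bigr)\,\mr{d}x\;=\;-2\pi i\int_{\partial P}(y\cdot v)\,e(-x\cdot y)\,\mr{d}s,
\]
where $v$ is the outward unit normal (defined $\mr{d}s$-a.e. since $P$ is polygonal).

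Next I would decompose the boundary integral as a sum over the edges $[a_j,a_{j+1}]$, on each of which the outward normal is the constant vector $v_j$. Parametrizing the $j$-th edge as $x(t)=a_j+t\sigma_j$ for $t\in[0,\ell_j]$, with $\ell_j=|a_{j+1}-a_j|$, one computes, assuming $y\cdot\sigma_j\neq 0$,
\[
\int_{[a_j,a_{j+1}]}\!\!e(-x\cdot y)\,\mr{d}s\;=\;e(-a_j\cdot y)\!\int_0^{\ell_j}\!\!e(-t\,\sigma_j\cdot y)\,\mr{d}t\;=\;\frac{e(-a_{j+1}\cdot y)-e(-a_j\cdot y)}{-2\pi i\,(y\cdot\sigma_j)},
\]
using $a_j+\ell_j\sigma_j=a_{j+1}$. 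Substituting this expression back into the previous display and multiplying by the prefactor $-1/(2\pi|y|)^2$ cancels the two factors of $-2\pi i$ and yields exactly the formula stated in the proposition.

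The only real obstacle is the degenerate set where $y\cdot\sigma_j=0$ for some $j$, which is where the claimed formula has an apparent $0/0$. At such $y$ the numerator also vanishes (since $y\cdot(a_{j+1}-a_j)=\ell_j\,y\cdot\sigma_j=0$), so the corresponding summand is really the limit $\ell_j\,e(-a_j\cdot y)$, consistent with the direct evaluation of the edge integral. The remaining edges satisfy $y\cdot\sigma_k\neq 0$ on a dense open set, so the identity extends by continuity to the full measure-zero exceptional set and holds for every $y\neq 0$ in the sense prescribed by the expression (interpreted via its removable singularities). The only other technical point is to justify that the divergence theorem applies to a complex-valued $C^\infty$ integrand on a polygonal domain, which is a standard consequence of its validity on Lipschitz domains applied to real and imaginary parts separately.
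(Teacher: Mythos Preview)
Your argument is correct and follows exactly the Divergence Theorem route sketched in the paragraph preceding the proposition; the paper itself does not supply a proof but simply cites \cite[Lemma~2.2]{BranColzTrav1997}. One minor slip: the limit of the parenthetical quotient as $y\cdot\sigma_j\to 0$ is $-2\pi i\,\ell_j\,e(-a_j\cdot y)$, not $\ell_j\,e(-a_j\cdot y)$ (your value is the edge integral itself, before the $-2\pi i$ from the divergence step is folded in), which is precisely the interpretation the paper records after the proposition; this does not affect the validity of the removable-singularity argument.
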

This proposition is proved as \cite[Lemma 2.2]{BranColzTrav1997}. The quantity in parenthesis in the sum is interpreted to equal
\[-\left(2\pi i |a_{j+1}-a_j|\right)e\left(\frac{-y\cdot(a_j+a_{j+1})}{2}\right),\]
when $y\cdot\sigma_j=0$. Proposition \ref{prop.FourTranPoly} implies that
\begin{equation}\label{eqn.PolyFourEst}
	\widehat{\chi}_P(y)\ll_P \frac{1}{|y|}\sum_{j=1}^m\min\left\{1,\frac{1}{|y\cdot\sigma_j|}\right\}.
\end{equation}
One thing to note is that these Fourier coefficients decay in most directions like $|y|^{-2}$. However, in directions perpendicular to the sides of $P$, they decay only like $|y|^{-1}$. This turns out to be a source of some difficulty, which is given special attention in the proofs below.

\section{Setup for counting estimates}\label{sec.Setup}

In light of the discussion at the end of Section \ref{sec.ModelSets}, we will now focus on the proof of Theorem \ref{thm.PenCPVersion}. Working within the scope of the CPS used in that section to define Penrose tilings, suppose that $\omega\in\C$ and $1\le m\le 4$ are chosen and write $W=W_{m,\omega}$. We wish to estimate, for $R\ge 2$,
\[N_R:=\#\left(\Lambda_{m,\omega}\cap B_R\right)=\sum_{(\lambda,\lambda^\star)\in\mc{L}}\chi_R(\lambda)\chi_W(\lambda^\star)=\sum_{\lambda\in L}\chi_R(\lambda)\chi_W(\lambda^\star).\]
In the last equation here we have used the injectivity of $\pi|_\cL$ to write our sum as a sum over $L=\pi(\cL).$ With a view to using the Poisson summation formula, we would like to smooth the summand by convolving the indicator functions with smooth bump functions. However, this slightly changes the supports of these functions, which non-trivially affects the main terms in our point counting formulas. To account for this, we will modify the boundaries of the regions involved, to bound our sum from above and from below by smoothed sums with main terms that are close together.

Suppose that $\epsilon, \delta<1/4$ are small positive numbers, to be chosen later as functions of $R$. Let $U=U(\partial W,\delta)$ denote the $\delta$-neighborhood of the boundary of $W$, and set
\[W^-=W\setminus U\quad\text{and}\quad W^+=W\cup U.\]
Then, with $\psi_{\delta}$ as in \eqref{eqn.PsiEpsDef}, define
\[\chi_{W,\delta}^\pm=\chi_{W^\pm}\ast\psi_{\delta},\]
so that
\[\chi_{W,\delta}^-\le \chi_W\le \chi_{W,\delta}^+.\]
Next let
\[\chi_{R,\epsilon}^\pm=\chi_{R\pm\epsilon}\ast\psi_\epsilon,\]
and define
\[N_{R,\epsilon,\delta}^\pm = \sum_{\lambda\in L}\chi_{R,\epsilon}^\pm(\lambda)\chi_{W,\delta}^\pm(\lambda^\star),\]
so that
\begin{equation}\label{eqn.Thm4PtCnt1}
N_{R,\epsilon,\delta}^-\le N_R\le N_{R,\epsilon,\delta}^+.
\end{equation}

By Proposition \ref{prop.PSF}, and using the injectivity of $\pi|_{\cL^*}$ (see the comment in Section \ref{sec.ModelSets} regarding this), we have that
\begin{align}
	N_{R,\epsilon,\delta}^\pm&=\mr{dens}(\cL)\sum_{\theta\in L^\circledast}\widehat{\chi}_{R,\epsilon}^\pm(\theta)\widehat{\chi}_{W,\delta}^\pm(\theta^\star)\nonumber\\
	&=\pi R^2\mr{dens}(\cL)|W|\nonumber\\
	&\hspace*{20bp}+O(\epsilon R+\delta R^2 + 1)\label{eqn.MainErr1}\\
	&\hspace*{20bp}+O\left(\sum_{\theta\in L^\circledast\setminus\{0\}}\widehat{\chi}_R(\theta)\widehat{\psi}_{\epsilon}(\theta)\widehat{\chi}_{W}(\theta^\star)\widehat{\psi}_\delta(\theta^\star)\right).\label{eqn.MainErr2}
\end{align}
In \eqref{eqn.MainErr2} we are also using the linearity of the Fourier transform to compare $\widehat{\chi}_{R\pm\epsilon}$ and $\widehat{\chi}_{W^\pm}$ with $\widehat{\chi}_{R}$ and $\widehat{\chi}_{W}$, respectively. 

In order to make the error term in \eqref{eqn.MainErr1} small, we must choose $\epsilon$ and $\delta$ small, as functions of $R$. However, this must be balanced by the fact that the error in \eqref{eqn.MainErr2} grows larger for small values of these parameters. The remainder of our argument involves analyzing the dependence of \eqref{eqn.MainErr2} on $\epsilon$ and $\delta$ to make these observations precise.

\section{Analysis of error terms: Proofs of Theorems \ref{thm.Pen} and \ref{thm.PenCPVersion}}\label{sec.PenProof}

Write $f(\theta,\theta^\star)$ for the summand in \eqref{eqn.MainErr2}. We divide the sum into three parts, as
\[\sum_{\theta\in L^\circledast\setminus\{0\}}f(\theta,\theta^\star)=\left(\sum_{0<|\theta|\le R^{-1}}+\sum_{R^{-1}<|\theta|\le 1}+\sum_{1<|\theta|}\right)f(\theta,\theta^\star)=E_1+E_2+E_3.\]

\noindent\emph{Bound for $E_1$:} For $E_1$ we use the bounds $\widehat{\psi}_\epsilon(\theta)\ll 1$ and $\widehat{\chi}_R(\theta)\ll R^2$ from \eqref{eqn.PsiFourEst} and \eqref{eqn.ChiREstimate}. Since $\cL^*$ is the Minkowski embedding of an order in the algebraic number field $K=\Q(\zeta_5)$, there is an integer $M$ (here one can take $M=25^2$) with the property that, for every $\theta\in L^\circledast$,
\begin{equation}\label{eqn.MChoice}
	\mathrm{Norm}_{K/\Q}(\theta)=\theta\theta^\star\in\frac{1}{M}\Z.
\end{equation}

Using these facts, we have that
\begin{align}
	E_1&\ll R^2\sum_{n=1}^\infty\sum_{\substack{0<|\theta|<R^{-1}\\\frac{nR}{M}\le |\theta^\star|<\frac{(n+1)R}{M}}}|\widehat{\chi}_{W}(\theta^\star)| |\widehat{\psi}_\delta(\theta^\star)|.\label{eqn.E1Bound1}
\end{align}
To estimate $\widehat{\chi}_W$ we will use the following lemma.
\begin{lemma}\label{lem.ChiWEst1}
	For any $(\theta,\theta^\star)\in \cL^*\setminus\{0\}$, we have that
	\[\widehat{\chi}_W(\theta^\star)\ll\frac{1}{|\theta^\star|}\min\left\{1,|\theta|\right\}.\]
\end{lemma}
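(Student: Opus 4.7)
The plan is to apply Proposition~\ref{prop.FourTranPoly} to the pentagon $W$, whose edge vectors are of the form $c\zeta_5^j(\zeta_5-1)$ for a fixed nonzero real scalar $c$ of size $1$ or $\tau$. With $\sigma_j$ the corresponding unit vectors, the proposition gives
\[|\widehat{\chi}_W(\theta^\star)|\,\ll\,\frac{1}{|\theta^\star|}\sum_{j=1}^{5}\min\!\left\{1,\frac{1}{|\theta^\star\cdot\sigma_j|}\right\}.\]
Bounding each summand trivially by $1$ already yields the unconditional estimate $|\widehat{\chi}_W(\theta^\star)|\ll 1/|\theta^\star|$, which proves the lemma in the case $|\theta|\ge 1$. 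The substantive work is to upgrade each summand to $O(|\theta|)$ when $|\theta|<1$, and for that I would exploit the algebraic link between $\theta$ and $\theta^\star$.

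Writing $(\theta,\theta^\star)=(\sigma_1(\alpha),\sigma_2(\alpha))$ for the corresponding element $\alpha$ in an explicit fractional ideal of $K=\Q(\zeta_5)$ (one reads off $\mc{L}^\ast=\sigma(\tfrac{2}{5}\Z[\zeta_5])$ from the inclusion $\mc{L}^\ast\subset\tfrac{1}{25}\mc{L}$), choose the algebraic lift $\beta_j:=\zeta_5^{3j+3}-\zeta_5^{3j}\in\Z[\zeta_5]$ so that $\sigma_2(\beta_j)$ is parallel to the edge $\tau_j$ of $W$, and set
\[\mu_j\,:=\,\alpha\cdot g^{2}(\beta_j)+g^{2}(\alpha)\cdot\beta_j\,\in\,K,\]
where $g^{2}\in\mr{Gal}(K/\Q)$ denotes complex conjugation ($\zeta_5\mapsto\zeta_5^{-1}$). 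By construction $\mu_j$ is fixed by $g^{2}$, so $\mu_j\in\Q(\sqrt{5})$, and using $\sigma_3=\sigma_2\circ g^{2}$ and $\sigma_4=\sigma_1\circ g^{2}$ a direct computation gives
\[\sigma_2(\mu_j)=2(\theta^\star\cdot\tau_j),\qquad \sigma_1(\mu_j)=2(\theta\cdot\beta_j),\]
so that $4(\theta^\star\cdot\tau_j)(\theta\cdot\beta_j)=N_{\Q(\sqrt{5})/\Q}(\mu_j)$.

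Because $\alpha\in\tfrac{2}{5}\Z[\zeta_5]$ and $\beta_j,g^{2}(\beta_j)\in\Z[\zeta_5]$, one checks $\mu_j\in\tfrac{2}{5}\Z[\zeta_5]\cap\Q(\sqrt{5})=\tfrac{2}{5}\Z[\tau]$, whence $N_{\Q(\sqrt{5})/\Q}(\mu_j)\in\tfrac{4}{25}\Z$. Provided $\mu_j\ne 0$ this norm is bounded below in absolute value by a fixed positive constant; combined with the trivial estimate $|\theta\cdot\beta_j|\le|\theta||\beta_j|\ll|\theta|$ this forces $|\theta^\star\cdot\tau_j|\gg 1/|\theta|$, and hence $\min\{1,1/|\theta^\star\cdot\sigma_j|\}\ll|\theta|$. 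Summing over the five edges and inserting into the estimate from Proposition~\ref{prop.FourTranPoly} gives the bound in every direction for which $\mu_j\ne 0$ for all~$j$.

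The hard part I anticipate is handling the exceptional directions where $\mu_j=0$ for some~$j$: a short Hilbert~90 calculation shows this happens exactly when $\alpha/g^{2}(\alpha)=\zeta_5^{j+3}$, which pins $\alpha$ to the one-dimensional $\Q(\sqrt{5})$-subspace $\Q(\sqrt{5})\cdot\zeta_5^{3j+4}\subset K$ and makes $\theta^\star$ algebraically perpendicular to exactly one edge of $W$. In such a direction the single-edge estimate only controls the contribution by $1/|\theta^\star|$, so further structure is needed to recover the $|\theta|/|\theta^\star|$ bound; I would look for it in the rigid shape of $\alpha$ together with the $K/\Q$-norm identity $|\theta|^{2}|\theta^\star|^{2}=|N_{K/\Q}(\alpha)|\in\tfrac{16}{625}\Z\setminus\{0\}$, which should constrain the phases $e(-\theta^\star\cdot a_j)$ enough to extract cancellation against the adjacent edge terms. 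This is the narrow gate referenced in the introduction and where I expect the main difficulty of the proof to lie.
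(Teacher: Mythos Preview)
Your norm argument via $\mu_j\in\Q(\sqrt{5})$ is exactly the mechanism the paper uses, but the paper organises it differently, and that reorganisation is what closes your gap. Rather than treating the five edges separately, the paper first exploits the joint fivefold symmetry of the window and the lattice: for $(\theta,\theta^\star)\in\cL^*$ it passes to $(\lambda,\lambda^\star)=(\zeta_5^{\,j}\theta,\zeta_5^{\,2j}\theta^\star)\in\tfrac{1}{25}\cL$, with $j$ chosen so that $|\arg(\lambda^\star)|\le\pi/5$. In that sector only the \emph{vertical} edge of $P$ can have small dot product with $\lambda^\star$, and that dot product is $\asymp\mathrm{Im}(\lambda^\star)$. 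The norm argument is then applied once, to the purely imaginary element $\lambda-\overline{\lambda}\in\tfrac{1}{25}L$, yielding $\mathrm{Im}(\lambda)\,\mathrm{Im}(\lambda^\star)\gg 1$; this is your identity $N_{\Q(\sqrt{5})/\Q}(\mu_j)\gg 1$ specialised to the single rotated edge. The exceptional case in the paper's framing is ``$\lambda$ real'', which after unwinding the rotation is precisely your condition $\alpha/g^{2}(\alpha)=\zeta_5^{\,j+3}$; the paper disposes of it by the structural assertion that $L=(1-\zeta_5)\Z[\zeta_5]$ contains no nonzero real elements.

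Your proposed route through the exceptional case --- seeking cancellation against adjacent edge terms --- will not succeed. A regular pentagon has exactly one edge perpendicular to any given direction, so when $\theta^\star\cdot\sigma_j=0$ the $j$-th term in Proposition~\ref{prop.FourTranPoly} has modulus $\asymp 1/|\theta^\star|$, while by your own $\mu_{j'}\neq 0$ bound each of the remaining four terms is $O(|\theta|/|\theta^\star|)$; there is nothing of comparable size to cancel against, and the $K/\Q$-norm identity you mention controls only $|\theta||\theta^\star|$, not the phase. The right move is not to look for cancellation but, as the paper does, to rotate first and then argue that the (single) dangerous direction is never hit by the rotated dual lattice --- i.e.\ to rule out $\mu_j=0$ structurally rather than analytically.
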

\begin{proof}
	The bound $\widehat{\chi}_W(\theta^\star)\ll|\theta^\star|^{-1}$ follows immediately from \eqref{eqn.PolyFourEst}. To prove the other bound, we start with two observations:
	\begin{enumerate}[topsep=2pt,itemsep=4pt,parsep=4pt]
		\item[(i)] For any $(\theta,\theta^\star)\in\cL^*$ and for any integer $0\le j\le 4$, the point $(\zeta_5^j\theta,\zeta_5^{2j}\theta^\star)$ is an element of $\frac{1}{25}\cL$. This follows from the facts that $\cL^*\subseteq \frac{1}{25}\cL$ and $L=(1-\zeta_5)\Z[\zeta_5]$.
		\item[(ii)] If $(\lambda,\lambda^\star)\in\frac{1}{25}\cL$ then $(\overline{\lambda},\overline{\lambda^\star})\in\frac{1}{25}\cL$. This follows from the fact that complex conjugation commutes with the star map. One way to quickly see this is to note that $\mathrm{Gal}(K/\Q)$ is Abelian.
	\end{enumerate}
	Now suppose that $(\theta,\theta^\star)\in \cL^*\setminus\{0\}$, and choose $0\le j\le 4$ so that $|\mathrm{arg}(\zeta_5^{2j}\theta^\star)|\le \pi/5$. If we write $(\lambda,\lambda^\star)=(\zeta_5^j\theta,\zeta_5^{2j}\theta^\star)$ then it follows from \eqref{eqn.PolyFourEst}, together with the definition of $W$, that
	\begin{equation}\label{eqn.ChiWEst1}
		\widehat{\chi}_W(\theta^\star)\ll\frac{1}{|\lambda^\star|^2|\sin(\mathrm{arg}(\lambda^\star))|}.
	\end{equation}
	Since $(\overline{\lambda},\overline{\lambda^\star})\in\frac{1}{25}\cL$, we also have that
	\[(\lambda-\overline{\lambda},\lambda^\star-\overline{\lambda^\star})\in \frac{1}{25}\cL.\]
	Noting that $L=(1-\zeta_5)\Z[\zeta_5]$ does not contain any real numbers, the previous equation implies that
	\[\mr{Im}(\lambda)\mr{Im}(\lambda^\star)\gg 1.\]
	Then we have that
	\[|\lambda^\star|\sin(\mathrm{arg}(\lambda^\star))\gg |\mr{Im}(\lambda^\star)|\gg\frac{1}{|\lambda|},\]
	and this completes the proof.
\end{proof}
We pause here to comment that this lemma, and the other two lemmas in this section, are crucial steps in our argument. The proofs of these lemmas are delicate, and depend strongly on both the specific lattice and the precise shapes and orientations of the windows defining the cut and and project sets involved.

Now we apply Lemma \ref{lem.ChiWEst1} to \eqref{eqn.E1Bound1} to obtain
\begin{align}
	E_1&\ll R^2\sum_{n=1}^\infty\sum_{\substack{0<|\theta|<R^{-1}\\\frac{nR}{M}\le |\theta^\star|<\frac{(n+1)R}{M}}}\frac{|\theta|}{|\theta^\star|}\left(1+\delta^2|\theta^\star|^2\right)^{-N}\nonumber\\
	&\ll \sum_{n=1}^\infty\sum_{\substack{0<|\theta|<R^{-1}\\\frac{nR}{M}\le |\theta^\star|<\frac{(n+1)R}{M}}}\frac{1}{n}\left(1+\delta^2n^2R^2\right)^{-N}.\label{eqn.E1Bound2}
\end{align}
For a given choice of $n\in\N$, if we have two distinct points $(\theta_1,\theta_1^\star),(\theta_2,\theta_2^\star)\in\cL^*$ which satisfy
\[0<|\theta_i|<\frac{1}{R}\]
for $i=1, 2$, then, since their difference is in $\cL^*$, we must have that $|\theta_1^\star-\theta_2^\star|\gg R$. It follows that
\[\#\left\{\theta\in L^\circledast: |\theta|<\frac{1}{R},~ \frac{nR}{M}\le|\theta^\star|<\frac{(n+1)R}{M}\right\}\ll \frac{((n+1)R)^2-(nR)^2}{R^2}\ll n,\]
and using this in \eqref{eqn.E1Bound2} gives that
\begin{align}\label{eqn.E1Bound3}
	E_1\ll\sum_{n=1}^\infty(1+\delta^2n^2R^2)^{-N}\ll\int_{1}^\infty(1+\delta^2t^2R^2)^{-N}\mr{d}t\ll(\delta R)^{-1}.
\end{align}

\noindent\emph{Bound for $E_2$:} To estimate $E_2$, first write
\[\mc{S}_{m,n}=\left\{\theta\in L^\circledast :  \frac{2^{m-1}}{R}<|\theta|\le\frac{2^m}{R},~\frac{nR}{M2^m}\le|\theta^\star|<\frac{(n+1)R}{M2^m}\right\},\]
with $M$ chosen as in \eqref{eqn.MChoice}. Then we have that
\begin{align}
	E_2&\ll R^{1/2}\sum_{m=1}^{\lceil \log_2R\rceil}\sum_{n=1}^\infty\sum_{\theta\in\mc{S}_{m,n}}\frac{1}{|\theta|^{3/2}}|\widehat{\chi}_W(\theta^\star)|\left(1+\delta^2|\theta^\star|^2\right)^{-N}\nonumber\\
	&\ll R^2 \sum_{m=1}^{\lceil\log_2R\rceil}2^{-3m/2}\sum_{n=1}^\infty\left(1+\frac{\delta^2n^2R^2}{2^{2m}}\right)^{-N}\sum_{\theta\in\mc{S}_{m,n}}|\widehat{\chi}_W(\theta^\star)|.\label{eqn.E2Bound1}
\end{align}
Here we will use the following lemma.
\begin{lemma}\label{lem.ChiWEst2}
	With notation as above, for $m,n\in\N$ with $m\le\lceil \log_2R\rceil$, we have that
	\[\sum_{\theta\in\mc{S}_{m,n}}|\widehat{\chi}_W(\theta^\star)|\ll\frac{2^{2m}\log n}{nR^2}.\]
\end{lemma}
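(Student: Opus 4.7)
My plan is to refine Lemma \ref{lem.ChiWEst1} by a dyadic decomposition of $\mathcal{S}_{m,n}$ in the angular variable of $\theta^\star$. For each $\theta$, following the proof of Lemma \ref{lem.ChiWEst1}, pick $j_\theta \in \{0,\dots,4\}$ so that $\lambda^\star := \zeta_5^{2j_\theta}\theta^\star$ has $|\mathrm{arg}(\lambda^\star)| \le \pi/5$; in that sector only the side of $P$ perpendicular to the real axis contributes a ``bad'' direction to \eqref{eqn.PolyFourEst}, so using the $\zeta_5$-symmetry of the pentagon,
\[
|\widehat{\chi}_W(\theta^\star)| \;\ll\; \frac{1}{|\theta^\star|^{2}\,|\sin(\mathrm{arg}(\lambda^\star))|}\;+\;\frac{1}{|\theta^\star|^2}.
\]
The map $(x,x^\star)\mapsto (\zeta_5 x,\zeta_5^2 x^\star)$ preserves $\cL^*$ since $\zeta_5$ is a unit in $\Z[\zeta_5]$, so the five sets $\mathcal{S}_{m,n}^{(j)}:=\{\theta:j_\theta=j\}$ are in bijection with equal Fourier contributions, and it suffices to bound the sum over $\mathcal{S}_{m,n}^{(0)}$, where $\lambda^\star=\theta^\star$.

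Next I would use the Diophantine step in the proof of Lemma \ref{lem.ChiWEst1}---applying \eqref{eqn.MChoice} to the purely imaginary lattice element $(\theta-\overline\theta,\theta^\star-\overline{\theta^\star})\in\tfrac{1}{25}\cL$---to obtain $|\mathrm{Im}(\theta)|\,|\mathrm{Im}(\theta^\star)|\gg 1$, hence $|\sin(\mathrm{arg}(\theta^\star))|\gg 1/(|\theta||\theta^\star|)\gg 1/n$ on $\mathcal{S}_{m,n}$. Combined with $|\sin(\mathrm{arg}(\theta^\star))|\le\sin(\pi/5)$ on $\mathcal{S}_{m,n}^{(0)}$, only dyadic scales $0\le k\le\lceil\log_2 n\rceil$ occur. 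Define
\[
\mathcal{S}_{m,n}^{(0,k)}:=\{\theta\in\mathcal{S}_{m,n}^{(0)}:2^{-k-1}<|\sin(\mathrm{arg}(\theta^\star))|\le 2^{-k}\};
\]
each such $\theta$ contributes $|\widehat{\chi}_W(\theta^\star)|\ll 2^k\cdot 4^m/(n^2R^2)$.

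The crux is the lattice count $|\mathcal{S}_{m,n}^{(0,k)}|\ll 2^{-k}n+1$, whose leading term matches the 4D volume $\asymp(4^m/R^2)\cdot 2^{-k}(nR/2^m)(R/2^m)=2^{-k}n$ of the defining region, with $+1$ absorbing boundary corrections. Granted this, the contribution of level $k$ is
\[
(2^{-k}n+1)\cdot 2^k\cdot \frac{4^m}{n^2R^2}\;=\;\frac{4^m}{nR^2}+2^k\cdot\frac{4^m}{n^2R^2};
\]
summing over $k=0,\dots,\lceil\log_2 n\rceil$ gives $\ll 4^m(\log n)/(nR^2)$ from the first term and $\ll 4^m/(nR^2)$ from the geometric series of the second. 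Combined with the trivial bound $|\mathcal{S}_{m,n}|/|\theta^\star|^2\ll 4^m/(nR^2)$ from the error term $1/|\theta^\star|^2$, and the factor of $5$ for the other sectors, this gives the claimed $\ll 2^{2m}\log n/(nR^2)$.

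The main obstacle is the rigorous proof of $|\mathcal{S}_{m,n}^{(0,k)}|\ll 2^{-k}n+1$: the region is a convex body in $\R^4$ of very high aspect ratio, so a naive Minkowski-type bound via successive minima is too weak, and one must exploit the specific arithmetic of $\cL^*\subset\tfrac{1}{25}\sigma((1-\zeta_5)\Z[\zeta_5])$. A natural strategy is to promote the Diophantine inequality $|\mathrm{Im}(\delta)|\,|\mathrm{Im}(\delta^\star)|\gg 1$ for $\delta\in\cL^*\setminus\{0\}$ into a packing bound on differences of elements of $\mathcal{S}_{m,n}^{(0,k)}$. This is exactly the type of delicate interaction between the pentagon's $\zeta_5$-symmetry and the arithmetic of $\Z[\zeta_5]$ that the paper emphasizes as essential just after Lemma \ref{lem.ChiWEst1}.
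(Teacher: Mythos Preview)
Your reduction to the sector $|\mathrm{arg}(\theta^\star)|\le\pi/5$ via the $\zeta_5$-symmetry and the Diophantine input $|\mathrm{Im}(\lambda)|\,|\mathrm{Im}(\lambda^\star)|\gg 1$ are exactly what the paper uses, and your dyadic decomposition in the angle is just a repackaging of the paper's device of ordering the points by $|\mathrm{arg}(\lambda^\star)|$ and summing a harmonic series. The only real gap is the count $|\mathcal{S}_{m,n}^{(0,k)}|\ll 2^{-k}n+1$, which you flag as the ``main obstacle'' but is simpler than you fear.

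You do not need a $4$-dimensional lattice-point estimate at all: the count takes place entirely in the internal space $H$. For distinct $\theta_1,\theta_2\in\mathcal{S}_{m,n}^{(0)}$, the norm bound \eqref{eqn.MChoice} applied to $\theta_1-\theta_2$ gives $|\theta_1^\star-\theta_2^\star|\gg 1/|\theta_1-\theta_2|\gg R/2^m$. Thus the points $\theta^\star$ form an $(R/2^m)$-separated set in an annulus of width $\ll R/2^m$ and radius $\gg nR/2^m$; the portion with $|\sin(\mathrm{arg})|\le 2^{-k}$ has arc-length $\ll 2^{-k}\cdot nR/2^m$, so it holds $\ll 2^{-k}n+1$ such points. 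Equivalently, and this is literally the paper's argument: label the $\theta^\star$ by increasing $|t_j|:=|\mathrm{arg}(\theta_j^\star)|$; separation in a thin annulus forces $t_j\gg j/n$ and $J\ll n$, whence $\sum_j 1/|\sin t_j|\ll n\sum_{j\le J}1/j\ll n\log n$, and dividing by $|\theta^\star|^2\gg (nR/2^m)^2$ gives the lemma directly, without the dyadic split. Your proposed alternative, applying the conjugation trick to differences to get $|\mathrm{Im}(\delta)|\,|\mathrm{Im}(\delta^\star)|\gg 1$, also works and gives $(R/2^m)$-separation of the imaginary parts of the $\theta^\star$, leading to the same count; but the plain norm bound is the more direct route here.
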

\begin{proof}
	Let us write
	\[\tilde{\mc{S}}=\left\{\lambda\in \frac{1}{25}L :  \frac{2^{m-1}}{R}<|\lambda|\le\frac{2^m}{R},~\frac{nR}{M2^m}\le|\lambda^\star|<\frac{(n+1)R}{M2^m},~|\mr{arg}(\lambda^\star)|\le \frac{\pi}{5}\right\}.\]
	By the argument used in the proof of Lemma \ref{lem.ChiWEst1} leading up to equation \eqref{eqn.ChiWEst1}, it is sufficient for this lemma to prove that
	\begin{equation*}
		\sum_{\lambda\in\tilde{\mc{S}}}\frac{1}{|\lambda^\star|^2|\sin(\mathrm{arg}(\lambda^\star))|}\ll\frac{2^{2m}\log n}{nR^2}.
	\end{equation*}
	Label the elements of $\star(\tilde{\mc{S}})$ as
	\[\lambda_j^\star=\rho_je^{it_j},\]
	for $1\le j\le J:=|\tilde{\mc{S}}|$, with each $\rho_j\in\R^+$ and with $0< |t_1|\le |t_2|\le\cdots \le |t_J|\le\pi/5$. Also, label the element of $\tilde{\mc{S}}$ corresponding to each $\lambda_j^\star$ as $\lambda_j$.
	
	By the conjugation argument used in the proof of Lemma \ref{lem.ChiWEst1}, we have that
	\[t_1\gg \frac{1}{|\lambda_1||\lambda_1^\star|}\gg\frac{1}{n}.\]
	Furthermore, by considering norms, we also have that
	\[|\lambda_{j+1}^\star-\lambda_{j}^\star|\gg \frac{1}{|\lambda_{j+1}-\lambda_j|}\gg\frac{R}{2^m}.\]
	Since the numbers $\lambda_j^\star$ are confined to an annulus of width $\ll R/2^m$, this forces
	\[t_j\gg\frac{j}{n},\]
	for all $1\le j\le J$, and it also shows that $J\ll n$. We therefore have that
	\begin{align*}
		\sum_{\lambda\in\tilde{\mc{S}}}\frac{1}{|\lambda^\star|^2|\sin(\mathrm{arg}(\lambda^\star))|}&\ll\frac{2^{2m}}{nR^2}\sum_{j=1}^J\frac{1}{j}\ll\frac{2^{2m}\log n}{nR^2},
	\end{align*}
	which completes the proof of the lemma.
\end{proof}
Substituting the bound from Lemma \ref{lem.ChiWEst2} into equation \eqref{eqn.E2Bound1}, we have that
\begin{align}
	E_2&\ll \sum_{m=1}^{\lceil\log_2R\rceil}2^{m/2}\sum_{n=1}^\infty\frac{\log n}{n}\left(1+\frac{\delta^2n^2R^2}{2^{2m}}\right)^{-N}\nonumber\\
	&\ll \sum_{m=1}^{\lceil\log_2R\rceil}2^{m/2}\int_1^\infty\frac{\log t}{t}\left(1+\frac{\delta^2t^2R^2}{2^{2m}}\right)^{-N}\mr{d}t\nonumber.
\end{align}
As long as we choose $\delta\gg R^{\kappa}$ for some fixed $\kappa>0$, we will obtain from this that
\begin{equation}\label{eqn.E2Bound2}
	E_2\ll_\kappa R^{1/2}\log R.
\end{equation}

\noindent\emph{Bound for $E_3$:} In order to estimate $E_3$, write
\begin{align*}
\mc{S}_{\ell,m,n}&=\left\{\theta\in L^\circledast : m<|\theta|\le m+1,~\frac{2\pi(\ell-1)}{m}\le\mr{arg}(\theta) <\frac{2\pi\ell}{m},\right.\\
&\hspace*{50bp} \left.\phantom{\frac{2}{m}}n-1<|\theta^\star|\le n\right\}.
\end{align*}
Here we will need the following result.
\begin{lemma}\label{lem.ChiWEst3}
	For $\ell,m,n\in\N$, we have that
	\[\sum_{\theta\in\mc{S}_{\ell,m,n}}|\widehat{\chi}_W(\theta^\star)|\ll\frac{\log n}{n}.\]
\end{lemma}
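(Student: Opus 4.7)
The strategy is to adapt the proof of Lemma \ref{lem.ChiWEst2} to the new geometry of $\mc{S}_{\ell,m,n}$: here the $\theta$-region is a thin sector of an annulus, of angular width $2\pi/m$ and radial thickness $1$, so its diameter is $O(1)$, while the $\theta^\star$-region is the annulus of inner radius $n-1$ and outer radius $n$. Following the rotation reduction used in Lemmas \ref{lem.ChiWEst1} and \ref{lem.ChiWEst2}, for each $\theta$ choose $0\le j\le 4$ so that $\lambda^\star=\zeta_5^{2j}\theta^\star$ satisfies $|\arg\lambda^\star|\le\pi/5$ and set $\lambda=\zeta_5^j\theta$, whereupon \eqref{eqn.PolyFourEst} yields
\[|\widehat{\chi}_W(\theta^\star)|\ll\frac{1}{|\lambda^\star|^2|\sin(\arg\lambda^\star)|}.\]
Let $\tilde{\mc{S}}\subset\tfrac{1}{25}L$ collect these rotated points; label them $\lambda_1,\dots,\lambda_J$ so that $t_j:=|\arg\lambda_j^\star|$ is nondecreasing.

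The essential input is the norm bound underlying \eqref{eqn.MChoice}, which gives $|\lambda_i-\lambda_j|\,|\lambda_i^\star-\lambda_j^\star|\gg 1$ for distinct $\lambda_i,\lambda_j\in\tilde{\mc{S}}$. Since $\tilde{\mc{S}}$ is contained in a set of diameter $O(1)$ in the $\theta$-plane, I obtain the pairwise separation $|\lambda_i^\star-\lambda_j^\star|\gg 1$. The $j$ points $\lambda_1^\star,\dots,\lambda_j^\star$ all live in an annular sector of radial width $\le 1$ at radius $\asymp n$ and angular extent $2t_j$, hence of area $\asymp nt_j$, and a disk-packing argument (balls of radius $\tfrac{1}{2}$ around each point) produces $j\ll nt_j+1$. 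For $j\ge 2$ this forces the separation estimate $t_j\gg j/n$, and globally $J\ll n$; crucially the parameter $m$ disappears here because the arc length $m\cdot(2\pi/m)$ of the $\theta$-sector is $O(1)$.

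To finish, split the sum according to whether $t_j$ exceeds $1/n$. The points with $t_j\le 1/n$ lie in a subsector of area $O(1)$, so packing shows there are $O(1)$ of them; for these I use the crude bound $|\widehat{\chi}_W(\theta^\star)|\ll 1/|\theta^\star|\ll 1/n$ from \eqref{eqn.PolyFourEst}, contributing $O(1/n)$. For the remaining points I use the refined bound together with $t_j\gg j/n$ to obtain
\[\sum_{j:\,t_j>1/n}\frac{1}{|\lambda_j^\star|^2|\sin t_j|}\ll\frac{1}{n^2}\sum_{j=1}^{J}\frac{n}{j}\ll\frac{\log n}{n},\]
and summing both contributions yields the claim.

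The main obstacle I anticipate is exactly the behavior of $t_1$. The conjugation argument used in Lemmas \ref{lem.ChiWEst1} and \ref{lem.ChiWEst2} only delivers $t_1\gg 1/(|\lambda_1||\lambda_1^\star|)\gg 1/(mn)$ here, which is too weak to absorb into $\log n/n$; the remedy is that the area/packing argument limits the \emph{number} of points with $t_j\le 1/n$ to $O(1)$, so for these one may discard the $\sin$ factor entirely and use the cruder polygon estimate. This separation of the points near the real axis from those further away is what keeps the final bound uniform in $\ell$ and $m$.
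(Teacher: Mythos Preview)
The paper omits this proof, pointing to Lemma~\ref{lem.ChiWEst2}, and your adaptation is the intended one. In particular your treatment of the points with $t_j\le 1/n$ is a correct and necessary addition: here $|\lambda_1|\,|\lambda_1^\star|\asymp mn$, so the conjugation argument from Lemma~\ref{lem.ChiWEst1} only gives $t_1\gg 1/(mn)$, and falling back to the crude bound $|\widehat{\chi}_W(\theta^\star)|\ll 1/|\theta^\star|$ for the $O(1)$ points in that narrow sector is exactly the right move.

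There is one genuine slip. After you rotate, the set $\tilde{\mc{S}}$ of points $\lambda=\zeta_5^{j(\theta)}\theta$ is \emph{not} contained in a set of diameter $O(1)$: different $\theta\in\mc{S}_{\ell,m,n}$ can have $\theta^\star$ in different $2\pi/5$-sectors, hence receive different rotation indices $j$, and the resulting $\lambda$'s fall into up to five rotated copies of the original $\ell$-sector at mutual distance $\asymp m$. So the norm bound applied to $\lambda_i-\lambda_j$ only yields $|\lambda_i^\star-\lambda_j^\star|\gg 1/m$, which is too weak for your packing step. The repair is painless: apply the norm bound to the \emph{unrotated} differences $\theta_i-\theta_j$ (the sector $\mc{S}_{\ell,m,n}$ genuinely has diameter $O(1)$) to get $|\theta_i^\star-\theta_j^\star|\gg 1$, and then split $\tilde{\mc{S}}$ into the at most five pieces with a fixed rotation index. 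On each piece the map $\theta^\star\mapsto\lambda^\star$ is a single isometry, the separation $\gg 1$ transfers to the $\lambda_j^\star$, and your packing and summation arguments run verbatim; summing over the five pieces costs only an absolute constant.
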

We will omit the proof of this lemma, which follows from essentially the same arguments used to prove Lemma \ref{lem.ChiWEst2}.

We now have that
\begin{align*}
	E_3&\ll\sum_{m=1}^{\infty}\sum_{n=1}^\infty\sum_{\ell=1}^m\sum_{\theta\in\mc{S}_{\ell,m,n}}\frac{R^{1/2}}{|\theta|^{3/2}}\left(1+\epsilon^2|\theta|^2\right)^{-N}|\widehat{\chi}_W(\theta^\star)|\left(1+\delta^2|\theta^\star|^2\right)^{-N}\nonumber\\
	&\ll R^{1/2}\sum_{m=1}^{\infty}\frac{\left(1+\epsilon^2m^2\right)^{-N}}{m^{3/2}}\sum_{n=1}^\infty\left(1+\delta^2n^2\right)^{-N}\sum_{\ell=1}^m\sum_{\theta\in\mc{S}_{\ell,m,n}}|\widehat{\chi}_W(\theta^\star)|\\
	&\ll R^{1/2}\sum_{m=1}^{\infty}\frac{\left(1+\epsilon^2m^2\right)^{-N}}{m^{1/2}}\sum_{n=1}^\infty\frac{\log n}{n}\left(1+\delta^2n^2\right)^{-N}\\
	&\ll R^{1/2}\sum_{m=1}^{\infty}\frac{\left(1+\epsilon^2m^2\right)^{-N}}{m^{1/2}}\sum_{n=1}^\infty\frac{\log n}{n}\left(1+\delta^2n^2\right)^{-N}.
\end{align*}
Comparing the series here with the corresponding integrals, again assuming that $\delta\gg R^{-\kappa}$, gives that
\begin{equation}\label{eqn.E3Bound1}
	E_3\ll_\kappa \epsilon^{-1/2}R^{1/2}\log R.
\end{equation}

\noindent\emph{Proof of Theorem \ref{thm.PenCPVersion}:} Combining our error estimates \eqref{eqn.E1Bound3}, \eqref{eqn.E2Bound2}, and \eqref{eqn.E3Bound1} for the $E_j$ sums, together with those in \eqref{eqn.MainErr1}, we have that
\[N_{R,\epsilon,\delta}^\pm-\pi R^2\mr{dens}(\cL)|W|\ll \epsilon R+\delta R^2 + (\delta R)^{-1}+R^{1/2}\log R+\epsilon^{-1/2}R^{1/2}\log R.\]
Choosing $\epsilon=(\log R)^{2/3}R^{-1/3}$ and $\delta=R^{-3/2}$ is optimal here, and produces the bound reported in Theorem \ref{thm.PenCPVersion}.

\noindent\emph{Proof of Theorem \ref{thm.Pen}:} Let $V$ be the vertex set of a unit length RPT. If $V$ can be constructed as a cut and project set, as $V=\Lambda_\omega$ for some $\omega\in\C$, then the result of Theorem \ref{thm.Pen} follows immediately from Theorem \ref{thm.PenCPVersion}. Otherwise, by Theorem \ref{thm.PenVerts}, there is a sequence of complex numbers $\{\omega_j\}_{j=1}^\infty$ with the property that, after a possible rotation,
\[V=\lim_{j\rar\infty}\Lambda_{\omega_j},\]
where the limit is taken in the Chabauty-Fell topology. For $R\ge 2$, the number of points in $\R^2$ which lie on the boundary of $B_R$, and which are the limits of sequences of points $\lambda_j\in\Lambda_{\omega_j}$, as $j\rar\infty,$ must be $\ll R^{2/3}(\log R)^{2/3}$. This follows from the uniformity over $\omega\in \C$ of the error term in Theorem \ref{thm.PenCPVersion}. It then follows, again by uniformity, that
\begin{align}
	\#(V\cap B_R)&=\lim_{j\rar\infty}\#\left(\Lambda_{\omega_j}\cap B_R\right)+O(R^{2/3}(\log R)^{2/3})\nonumber\\
	&=\pi R^2\mr{dens}(\cL)\sum_{m=1}^4|W_{m,\omega+m}|+O( R^{2/3}(\log R)^{2/3}).\label{eqn.ChabLim1}
\end{align}
Finally, we compute that
\[\mr{dens}(\cL)=\mr{disc}(L)^{-1/2}=\frac{4}{25\sqrt{5}}.\]
Substituting this in \eqref{eqn.ChabLim1}, together with the areas of the pentagonal windows, completes the proof of Theorem \ref{thm.Pen}.

%
%\section{Modifications necessary for the proof of Theorem \ref{thm.Amm}}\label{sec.PenAmm}
%

\section*{Acknowledgements}
We would like to thank Michael Baake for several correspondences and for his detailed comments on the realization of Penrose tilings as cut and project sets.

\vspace{.15in}
		
{\footnotesize
\noindent
Department of Mathematics\\
University of Houston\\
Houston, TX, United States\\
haynes@math.uh.edu\\
clutsko@uh.edu
			
}


\begin{thebibliography}{1}
			

%\bibitem{}
%
%\vspace*{.1in}

\bibitem{BaakGrim2013}
M. Baake and U. Grimm, {\it Aperiodic Order. Vol. 1: A Mathematical Invitation}, Cambridge Univ. Press, Cambridge (2013).

\vspace*{.1in}

\bibitem{BaakHayn2026}
M. Baake and A. Haynes, Convergence of Fourier-Bohr coefficients for regular Euclidean model sets, arXiv:2308.07105 (to appear in {\it Aperiodic Order. Vol. 3: Model Sets and Dynamical Systems}, eds. M. Baake, F. G\"ahler, and N. Ma\~nibo).

\vspace*{.1in}

\bibitem{Been1982}
F.~P.~M. Beenker, Algebraic theory of non periodic tilings of the plane by two simple building blocks: a square and a rhombus, TH Report 82-WSK-04 (1982), Technische Hogeschool, Eindhoven.

\vspace*{.1in}

\bibitem{BranColzTrav1997}
L. Brandolini, L. Colzani and G. Travaglini, Average decay of Fourier transforms and integer points in polyhedra, Ark. Mat. {\bf 35} (1997), no.~2, 253--275.

\vspace*{.1in}

\bibitem{deBr1981}
N.~G. de~Bruijn, Algebraic theory of Penrose's nonperiodic tilings of the plane. I, II, Nederl. Akad. Wetensch. Indag. Math. {\bf 43} (1981), no.~1, 39--52, 53--66.

\vspace*{.1in}

\bibitem{CassFroh1967}
J. W. S. Cassels and A. Fröhlich, Algebraic Number Theory, Academic Press, London (1967).

\vspace*{.1in}

\bibitem{GahlRhyn1986}
F. G\"ahler and J. Rhyner, Equivalence of the generalised grid and projection methods for the construction of quasiperiodic tilings, J. Phys. A {\bf 19} (1986), no.~2, 267--277.

\vspace*{.1in}

\bibitem{Gard1977}
M. Gardner, Mathematical Games, Scientific American, {\bf 236} (977), no.~1, 110--121.

\vspace*{.1in}

\bibitem{Gard1997}
M. Gardner, {\it Penrose tiles to trapdoor ciphers}, revised reprint of the 1989 original, 
MAA Spectrum, Math. Assoc. America, Washington, DC, 1997.

\vspace*{.1in}

\bibitem{GrunShep1987}
B. Gr\"unbaum and G.~C. Shephard, {\it Tilings and patterns}, Freeman, New York, 1987.

\vspace*{.1in}

\bibitem{Huxl2003}
M.~N.~Huxley, {\em Exponential sums and lattice points III},
Proc. London Math. Soc. (3)  {\bf 87}  (2003),  no. 3, 591--609.

\vspace*{.1in}

\bibitem{KoivLagacBjorHart2025}
H. Koivusalo and J. Lagac\'e (appendix by M. Bj\"orklund, T. Hartnick), Sharp density discrepancy for cut and project sets an approach via lattice point counting, Monatsh. Math. {\bf 208} (2025), no.~3, 397--444.

\vspace*{.1in}

\bibitem{Penr1974}
R. Penrose, The r\^{o}le of aesthetics in pure and applied mathematical research, Bull. Inst. Math. Appl. {\bf 10} (1974), 266--271.

\vspace*{.1in}

\bibitem{Penr1979}
R. Penrose, Pentaplexity: a class of nonperiodic tilings of the plane, Math. Intelligencer {\bf 2} (1979/80), no.~1, 32--37.

\vspace*{.1in}

\bibitem{Robi1996}
E.~A. Robinson Jr., The dynamical properties of Penrose tilings, Trans. Amer. Math. Soc. {\bf 348} (1996), no.~11, 4447--4464.

\vspace*{.1in}

\bibitem{RuhrSmilWeis2024}
R. R\"uhr, Y. Smilansky and B. Weiss, Classification and statistics of cut-and-project sets, J. Eur. Math. Soc. (JEMS) {\bf 26} (2024), no.~9, 3575--3638.

\vspace*{.1in}

\bibitem{Schi1972}
A.~Schinzel, {\em Wac\l{}aw Sierpi\'{n}ski's papers on the theory of numbers},
Acta Arith.  {\bf 21}  (1972), 7--13.

\vspace*{.1in}

\bibitem{SchmTrev2018}
S. Schmieding and R. Trevi\~no, Self affine Delone sets and deviation phenomena, Comm. Math. Phys. {\bf 357} (2018), no.~3, 1071--1112.

\vspace*{.1in}

\bibitem{Sene1995}
M.~W. Senechal, {\it Quasicrystals and geometry}, Cambridge Univ. Press, Cambridge, 1995.

\vspace*{.1in}

\bibitem{Sier1906}
W.~Sierpi\'{n}ski, {\em O pewnem zagadneniu w rachunku funkcyj asymptotznych}, Prace Mat.-Fiz. {\bf 17} (1906), 77--118.

\vspace*{.1in}

\bibitem{Solo2011}
Y. Solomon, {\em Substitution tilings and separated nets with similarities to the integer lattice}, Israel Journal of Mathematics {\bf 181} (2011), 445--460.

\vspace*{.1in}

\bibitem{SteiWeis1971}
E.~M. Stein and G.~L. Weiss, {\it Introduction to Fourier analysis on Euclidean spaces}, Princeton Mathematical Series, No. 32, Princeton Univ. Press, Princeton, NJ, 1971.


\vspace*{.1in}

\bibitem{Swinn2001}
H.~P.~F. Swinnerton-Dyer, {\it A brief guide to algebraic number theory}, London Mathematical Society Student Texts, 50, Cambridge Univ. Press, Cambridge, 2001.

\end{thebibliography}
	\end{document}